\title{The Consistency of Arithmetic}
\author{Timothy Y. Chow}
\date{July 11, 2018}
\newtheorem{theorem}{Theorem}
\begin{document}
\maketitle

In 2010,
Vladimir Voevodsky, a Fields Medalist and a
professor at the Institute for Advanced Study, gave a lecture
entitled, ``What If Current Foundations of Mathematics Are Inconsistent?''
Voevodsky invited the audience to consider seriously the possibility
that first-order Peano arithmetic (or PA for short) was inconsistent.
He briefly discussed two of the standard proofs of the consistency
of~PA (about which we will say more later), and explained why
he did not find either of them convincing.  He then said that he
was seriously suspicious that an inconsistency in~PA might someday
be found.

About one year later, Voevodsky might have felt vindicated when
Edward Nelson, a professor of mathematics at Princeton University,
announced that he had a proof not only that PA was inconsistent,
but that a small fragment of primitive recursive arithmetic
(PRA)---a system that is widely
regarded as implementing a very modest and ``safe''
subset of mathematical reasoning---was inconsistent~\cite{nelsonpra}.
However, a fatal error in his proof was soon detected by Daniel Tausk
and (independently) Terence Tao.  Nelson withdrew his claim, remarking
that the consistency of~PA remained ``an open problem.''

For mathematicians without much training in formal logic, 
these claims by Voevodsky and Nelson may seem bewildering.
While the consistency of
some axioms of \emph{infinite set theory} might be debatable,
is the consistency of~PA really ``an open problem,'' as Nelson claimed?
Are the existing proofs of the consistency of~PA suspect,
as Voevodsky claimed?  If so, does this mean that we cannot be sure
that even basic mathematical reasoning is consistent?

This article is an expanded version of an answer that I posted
on the MathOverflow website in response to the question,
``Is PA consistent? do we know it?''
Since the question of the consistency of PA seems to come up
repeatedly, and continues to generate confusion,
a more extended discussion seems worthwhile.

\section{Some Preliminaries}

One of the great achievements of the late 19th and
early 20th centuries was the recognition that many
seemingly \emph{metamathematical} questions---i.e.,
questions about the mathematical enterprise as a whole,
such as the validity of its methods of reasoning---could be
formulated as \emph{mathematical} questions,
and therefore studied mathematically.
In particular, the consistency of~PA can be thought of as
a purely mathematical assertion, and so one can ask the
usual questions that one typically asks of mathematical
statements---has it been proved?  and if so, what does the proof look like?

To understand the status of the statement ``PA is consistent,''
we must therefore first familiarize ourselves with
the relevant mathematical results.
Below, we review the main proofs of the consistency of~PA,
and then discuss their implications.

There is one sociological fact that contributes to the confusion
surrounding the consistency of~PA, namely that even though mathematicians
will agree in principle that every proof must start with some axioms,
in practice they almost never state explicitly what axioms they are
assuming.  If pressed, most mathematicians will usually say that the
generally accepted axiomatic system for mathematics is ZFC,
the \emph{Zermelo--Fraenkel axioms} for set theory
together with the Axiom of Choice.
Ironically, most mathematicians cannot even \emph{state} the axioms
of~ZFC precisely, let alone explicitly verify that their proofs
can be formalized in~ZFC.  Nevertheless, for most mathematicians,
ZFC acts as the \emph{de jure} foundation for mathematics,
and if someone does not bother to state explicitly
what axioms they are ultimately relying on,
then we can usually assume that ZFC will suffice.

\section{A ZFC Proof that PA is Consistent}

If the consistency of~PA is a mathematical question,
and ZFC is supposed to be the foundation for mathematics,
then a natural first question to ask is, is the consistency of~PA
provable in~ZFC?  The answer is yes.

This is a good moment to review the definition of~PA.
The full definition is somewhat complicated,
and is available in any number of textbooks on mathematical logic,
so we limit ourselves to a sketch.

The first thing to be aware of
is that even \emph{stating} the axioms of~PA requires
describing a \emph{formal language.}
Formulas in the
\emph{first-order language of arithmetic} are strings of symbols
satisfying certain syntactic rules.  There are logical symbols
$\vee$, $\wedge$, $\neg$, $=$, $\forall$, $\exists$
(the logical symbol $\implies$ can also be used, although it
is redundant since $P \implies Q$ is equivalent to $\neg P \vee Q$).
There are arithmetical function symbols $+$, $\times$, $0$, S,
and there is a relation symbol~$>$.
There are parentheses, used for grouping, and there are variables.
The syntactic rules allow us to write formulas such as
\begin{equation}
\label{eq:prime}
(z > \mbox{S0}) \wedge \forall x \forall y
((\neg (x\times y = z) \vee (x = \mbox{S0}) \vee (y = \mbox{S0}))
\end{equation}
Formula~\eqref{eq:prime} has two \emph{bound} variables
$x$ and~$y$, meaning that there is a quantifier attached to them,
and one \emph{free} variable~$z$.  Formulas with no free variables
are called \emph{sentences}.

The \emph{axioms} of~PA are certain sentences,
which are mostly what you would expect, e.g., formulas
for commutativity, associativity, distributivity, etc.
There is one axiom (or more precisely, an axiom
\emph{schema}, meaning a family of axioms satisfying a
certain ``template'') that is more subtle, namely
the \emph{induction axiom.}  Intuitively speaking, the induction
axiom says that if $P$ is a property that a natural number
might have, and if $0$ has~$P$, and moreover if
whenever $z$ has~$P$ then the successor of $z$ also has~$P$, then
every natural number has~$P$.  But what is a ``property''?

In \emph{first-order}
Peano arithmetic, which is the subject of the present article,
the induction axiom is asserted only for properties that
are \emph{expressible with a first-order formula}\footnote{There
is another version of the Peano axioms, usually known as the
\emph{second-order} Peano axioms, with the property that
there is only one mathematical structure satisfying them
(namely $\mathbb{N}$), and that can be used as a
\emph{definition} of~$\mathbb{N}$.  In contrast,
there are many non-isomorphic structures, known as
\emph{nonstandard models}, that satisfy the first-order
Peano axioms.}.
More precisely, for every first-order formula~$\phi(x, \mathbf{y})$
with free variables~$x, \mathbf{y}$ (here $\mathbf{y}$ represents
a finite sequence of variables),
we have an instance of the induction axiom
that looks something like this:
\begin{equation}
\label{eq:induction}
\forall \mathbf{y}
((\phi(0,\mathbf{y}) \wedge
\forall x (\phi(x,\mathbf{y}) \implies
\phi(Sx, \mathbf{y})))
\implies \forall x \phi(x, \mathbf{y}))
\end{equation}
\relax From the axioms, one can derive \emph{theorems} by
applying the \emph{rules of inference}
of first-order logic,
which are syntactic rules for manipulating formulas;
again, these rules are described
in textbooks and we will not enumerate them here.
We just remark that PA uses \emph{classical} rather
than \emph{intuitionistic} logic\footnote{As far as
the consistency of first-order arithmetic is concerned,
the distinction between intuitionistic logic and classical
logic turns out not to matter too much.  G\"odel,
and independently Gentzen~\cite{gentzen}, showed constructively that
\emph{Heyting arithmetic}, which is the intuitionistic
counterpart of PA, is consistent if and only PA is consistent.},
meaning that the rules include the \emph{law of the excluded middle}
(that allows one to deduce $P\vee\neg P$ for any~$P$).

Saying that \emph{PA is consistent} just means that
a contradiction---meaning a formula such as $(0=0) \wedge \neg(0=0)$
that is the conjunction of a formula and its negation---cannot
be derived from the axioms using the rules of inference.
Equivalently, since first-order logic is \emph{explosive,}
meaning that from a contradiction one can derive any (syntactically
well-formed) sentence whatsoever, \emph{PA is consistent} means
that there is some statement that is \emph{not} a theorem of~PA.

So far, our description of~PA has been purely \emph{syntactic}
and not \emph{semantic}.  That is, we have not assigned any
\emph{meaning} to the symbols.  \emph{Model theory} is the study
of mathematical structures that \emph{satisfy} given axioms;
to do model theory, we have to \emph{interpret} the symbols
$\vee$, $\wedge$, $\neg$, $=$, $\forall$, $\exists$ as
(respectively)
or, and, not, equals, for all, and there exists;
we also let the variables range over the elements\footnote{Note
in particular that the variables are \emph{not} allowed to
range over \emph{sets} of elements;
this restriction is what makes PA a \emph{first-order} theory.}
of the structure~$X$ that is to satisfy the axioms,
and we interpret the function and relation symbols 
as functions and relations on~$X$.

The standard way to show that some set of axioms is consistent is
to exhibit a structure that satisfies all the axioms.
In the case of~PA, the obvious candidate is~$\mathbb{N}$,
the set of natural numbers, with $+$, $\times$, $0$, S, and $>$
interpreted as addition, multiplication, zero, successor, and greater than.
After all, $\mathbb{N}$ was the example that motivated
the axioms of~PA in the first place.
Indeed, arguing set-theoretically, it is straightforward
to construct the natural numbers,
show that they satisfy all the axioms of~PA, and
conclude that PA is consistent.
This argument is easily formalized in ZFC.

It is worth remarking that this set-theoretic proof
of the consistency of~PA
does more than just show that the concept of an unbounded
sequence $1, 2, 3, \ldots$ is coherent;
if that were all it showed then it would not show very much,
since even asking whether PA is consistent
presupposes that \emph{the definition of~PA is coherent},
and that definition already implicitly assumes
that it is meaningful to talk about (certain kinds of) unbounded sequences,
such as arbitrarily long strings of symbols.
The ZFC proof affirms that first-order formulas involving
\emph{arbitrarily long alternations of quantifiers} (for all $x_1$
there exists $x_2$ such that for all $x_3$ there exists $x_4\ldots$)
express meaningful properties of natural numbers.
This claim goes beyond what is needed to construct PA itself.

\section{Implications of the ZFC Proof}

Under most circumstances, the formalizability in ZFC
of a proof of a statement~$S$ is
enough to cause people to regard $S$ as
``not an open problem.''
In fact, the above set-theoretic argument 
for the consistency of~PA
can be carried out using much weaker axioms than~ZFC,
and from a conventional mathematical standpoint is
just as rigorous as proving that the axioms for an algebraically
closed field are consistent by exhibiting~$\mathbb{C}$ as an example,
or proving that the axioms for a Hilbert space are consistent by
exhibiting $L^2([0,1])$ as an example.
If we regard a mathematical statement as being
\emph{definitively established} once it has been
\emph{mathematically proved},
then the consistency of~PA has been definitively established.
Nevertheless, many people find the above proof of the
consistency of~PA unsatisfactory.
Why might that be?

We can partially answer this question
by recalling some history\footnote{For much more historical
context, I recommend the article by Kahle~\cite{kahle}.}.
Especially during the late 19th
century and early 20th century, many mathematicians were concerned with
whether mathematical reasoning was trustworthy.  The paradoxes
of set theory had demonstrated that incautious use of superficially
valid mathematical reasoning could lead to contradictions, so
naturally, mathematicians were eager to delimit exactly which
reasoning principles were trustworthy and which were not.
One option was to be extremely conservative, but this came at
the cost of rejecting many mathematical proofs that seemed
perfectly fine, and not everyone was willing to give those up.
A variety of systems of varying logical strength were proposed
for formalizing various subsets of mathematical knowledge,
and PA was one candidate for formalizing \emph{arithmetical} knowledge.

Because of this potential role as a foundation for
part of mathematics,
people did not look at the axioms of~PA
in quite the same way that they looked at
axioms for an ``ordinary mathematical structure''
such as a differentiable manifold or a Lie algebra.
Many felt that a consistency proof for~PA should be held to
a \emph{higher standard} of rigor than usual---an ``ordinary''
mathematical proof might not be good enough,
since the consistency proof was supposed to certify
(to skeptics who raised doubts about certain kinds of
mathematical arguments) that the system was ``safe.''

In this context, someone could object that the
set-theoretic proof
employs dubious \emph{reasoning about infinity.}
Being finite creatures, we cannot apprehend infinite objects
in the same way that we can apprehend finite objects, and if
we reason about infinite objects by analogy with finite objects,
we might be on logically shaky ground.
If we re-examine the set-theoretic proof of the consistency of~PA,
then we see that it amounts to an argument that
there cannot be a contradiction
in the axioms of~PA, because there is an object---specifically,
an \emph{infinite} object,
namely~$\mathbb{N}$---that satisfies all those axioms.
A contradiction in~PA would mean that $\mathbb{N}$
simultaneously \emph{has} a
(first-order definable) property and \emph{does not have}
that property---but this is nonsense because an object either
has a property or it doesn't.

If you, like most mathematicians, find $\mathbb{N}$
and its first-order properties to be perfectly clear,
then the set-theoretic proof should satisfy you that PA is consistent.
But some might be uneasy that the argument seems to
presuppose the \emph{reality of infinite sets}
(sometimes referred to as \emph{platonism}
about infinite sets\footnote{On the other hand,
some people, such as
Solomon Feferman~\cite{feferman}, explicitly reject platonism 
but nevertheless find
the argument that $\mathbb{N}$ satisfies all the axioms of~PA
to be completely convincing.}).
Voevodsky noted in his talk that first-order properties of the
natural numbers can be \emph{uncomputable.}  This means that if our
plan is to react to a purported proof of $P\wedge\neg P$ by checking
directly whether $P$ or $\neg P$ holds for the natural numbers,
then we might be out of luck---we might not be able to figure out, in a
finite amount of time, which of $P$ and~$\neg P$ really holds of
the natural numbers.  In the absence of such a decision procedure,
how confident can we really be that the natural numbers must either
have the property or not?  Maybe the alleged ``property''
is meaningless.

This line of thinking may lead us to wonder if
``PA is consistent'' can be proved without assuming, as ZFC does,
that infinite sets exist.
After all, ``PA is consistent'' is a statement about what happens when
a finite list of rules is applied to finite strings of symbols, and if
there is a proof of a contradiction, then it must materialize after a
finite number of applications of those rules, and only finitely many
axioms can enter the picture.  It therefore seems plausible that we
might be able to give a \emph{finitary} proof that PA is consistent.
The word \emph{finitary} has no universally agreed-upon precise
definition, but following custom, we will use it informally to mean
methods of mathematical proof that try to avoid, or minimize,
assumptions about infinite quantities and processes.

\section{But Wait, What About G\"odel?}

At this point the reader might recall that G\"odel's Second
Incompleteness Theorem tells us that if PA is consistent, then
the consistency of~PA---or more precisely, a certain string
Con(PA) that ``expresses'' the consistency of PA---is not
provable in~PA.  Doesn't this theorem tell us that we cannot
hope to prove the consistency of~PA except by using an axiomatic
system that is \emph{stronger} than~PA?  And if that is the case,
then it would seem that we can \emph{never} be sure that PA is
consistent; if we have doubts about~PA, then any ``proof'' that
PA is consistent must rely on even more doubtful assumptions.
Any consistency proof must be circular
in the sense of assuming more than it proves,
so not only is the consistency of~PA an open problem,
it is doomed to remain open forever.

The above argument is correct, up to a point.
The MathOverflow question ``Is PA consistent? do we know it?''
asks more specifically whether the consistency of~PA has been
proved in ``a system that has itself been proven consistent.''
This question tacitly assumes that it is somehow possible to
``pull yourself up by your own bootstraps'' by setting up some
system whose consistency is \emph{guaranteed} because it has
been \emph{proven}---presumably in some absolute, unconditional
sense.  But any consistency proof has to assume \emph{something},
and you can always cast doubt on that ``something'' and demand
that \emph{it} be given a consistency proof, and so on ad infinitum.
Even if somehow you found a plausible system that
proved its own consistency\footnote{See
for example Willard~\cite{willard}
for an explanation of how this might be possible.},
any doubts you had about its consistency would hardly
be allayed just because it vouched for itself!
At some point, you simply have to take something for granted
without demanding that it be proved from something more basic.
This much is obvious, even without G\"odel's theorem.

Where the above argument goes wrong is the claim of circularity.
G\"odel's theorem does not actually say
that the consistency of~PA cannot be proved except in a system
that is \emph{stronger} than~PA.  It \emph{does} say that
Con(PA) cannot be proved in a system that is
\emph{weaker} than~PA, in the sense of a system whose theorems are a
subset of the theorems of~PA---and therefore Hilbert's original
program of proving statements such as Con(PA) or Con(ZFC)
in a strictly weaker system such as~PRA is doomed.
However, the possibility remains
open that one could prove Con(PA)
in a system that is \emph{neither weaker nor stronger} than~PA,
e.g., PRA together with an axiom (or axioms) that cannot
be proved in~PA but that we can examine on an individual basis,
and whose legitimacy we can accept.  This is exactly what Gerhard Gentzen
accomplished back in the 1930s, and it is to Gentzen's proof
that we turn next.

\section{Ordinals Below $\epsilon_0$}

The crux of Gentzen's consistency proof is something known
as the \emph{ordinal number~$\epsilon_0$}.
Some accounts of $\epsilon_0$ make it seem
``even more infinitary'' than the set of all natural numbers,
and so Gentzen's proof might seem to be even less satisfactory
than the ZFC proof, as far as suspicious axioms are concerned.
Therefore, this section gives a self-contained description of~$\epsilon_0$
that is as finitary as possible.
Our account borrows heavily from that of Franz\'en~\cite{franzen}.

I should remark that the discussion in this section and the next
is conducted using ``ordinary mathematics,'' and I advise
readers to use their ordinary mathematical ability
to digest the arguments, without at first worrying about
what assumptions are used in them.
The more subtle question of the minimal assumptions
needed for the proof can be addressed
after the arguments are understood.
We return to this question in Section~\ref{sec:gentzenimplications}.

Define a \emph{list} to be either an empty sequence
(denoted by [] and referred to as the \emph{empty list}),
or, recursively, a finite non-empty sequence of
lists.  So for example [[],[],[]] and [[[],[]],[[[],[[],[]]],[]]]
are lists.
The number of constituent
lists is called the \emph{length} of~$a$ (and is zero for
the empty list).
If $a$ is a non-empty list then we write $a[i]$ for
the $i$th constituent list of~$a$, where $i$ ranges from $1$
up to the length of~$a$.

Next, recursively define a total ordering~$\le$ on lists as follows
(it is essentially a lexicographic ordering).
Let $a$ and~$b$ be lists, with lengths $m$ and~$n$ respectively.
If $m\le n$ and $a[i] = b[i]$ for all $1\le i\le m$
(this condition is vacuously satisfied if $m=0$)
then $a\le b$.  Otherwise, there exists some $i$
such that $a[i] \ne b[i]$; let $i_0$ be the least such number,
and declare $a \le b$ if $a[i_0] \le b[i_0]$.

Finally, recursively define a list~$a$ to be an \emph{ordinal}
if all its constituent lists
are ordinals \emph{and} $a[i]\ge a[j]$ whenever $i<j$.
(In particular, the empty list is an ordinal since the
condition is vacuously satisfied.)

As an example, the smallest ordinals, listed in increasing order,
are [], [[]], [[],[]], [[],[],[]], [[],[],[],[]].
The ordinal [[[]]] is greater than all of these,
and [[[],[]]] is greater than [[[]],[[]]].

In the literature, what we here call an ordinal is called
the \emph{Cantor normal form} for an \emph{ordinal below~$\epsilon_0$},
and the standard notations for [[[]]] and
[[[],[]]] and [[[]],[[]]] are $\omega$ and $\omega^2$ and $\omega\cdot 2$
respectively.
We have chosen our notation to emphasize
that at no point are we appealing to the notion of
an infinite set.  Of course, the length of a list,
while finite, is unbounded, and hence if you wanted to
talk about \emph{the set of all lists} or
\emph{the set of all ordinals} then you would have to talk
about an infinite set.  However, there is no need to appeal
to such entities to make sense of our definitions.

The basic fact about ordinals is the following theorem.

\begin{theorem}
\label{thm:e0}
If $a_1, a_2, a_3, \ldots$ is a sequence of ordinals
and $a_i \ge a_j$ whenever $i<j$, then the sequence
stabilizes; i.e., there exists $i_0\ge 1$ such that
$a_i = a_{i_0}$ for all $i\ge i_0$.
\end{theorem}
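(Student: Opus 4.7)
The plan is to induct on the \emph{depth} of $a_1$, defined by $\mathrm{depth}([]) = 0$ and $\mathrm{depth}([c_1, \ldots, c_k]) = 1 + \max_j \mathrm{depth}(c_j)$. A preliminary induction on $\mathrm{depth}(a)$ establishes the monotonicity lemma that $b \le a$ implies $\mathrm{depth}(b) \le \mathrm{depth}(a)$: in the non-prefix case, $b$ and $a$ differ first at some position $j_0$ with $b[j_0] < a[j_0]$, and then each $b[j]$ is bounded in depth either by $a[j]$ (if $j < j_0$), by $a[j_0]$ via the inductive hypothesis (if $j = j_0$), or by $b[j_0]$ via the ordinal condition plus the inductive hypothesis (if $j > j_0$). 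Consequently every term of the weakly decreasing sequence $a_1 \ge a_2 \ge \cdots$ has depth at most $n := \mathrm{depth}(a_1)$.

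For the main induction, the base case $n = 0$ is immediate because $a_1 = []$ forces each $a_i = []$. For the inductive step we may assume every $a_i$ is non-empty (once an empty term appears, every subsequent term is also empty). The leading components $(a_i[1])$ form a weakly decreasing sequence of ordinals, each of depth at most $n-1$ as a constituent of $a_i$; applying the inductive hypothesis to $a_1[1]$ yields an index $N_1$ past which $a_i[1]$ equals some fixed value $c$. For $i \ge N_1$ let $m_i$ denote the length of the maximal initial run of $c$'s in $a_i$; a short case analysis on $a_{i+1} \le a_i$ shows $(m_i)$ is weakly decreasing in $\mathbb{N}$, so it stabilizes at some $M$. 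Either eventually $|a_i| = M$, in which case $a_i = [c, \ldots, c]$ has stabilized, or $a_i[M+1]$ exists and is strictly less than $c$, in which case one can apply exactly the same argument to the tail sequence $[a_i[M+1], a_i[M+2], \ldots]$, producing a new stabilized leading value $c_1 < c$, a block length $M_1$, and so on.

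The main obstacle is to justify that this iteration terminates, since the tails need not have smaller depth than $a_1$. The resolution is that the successive ``new leading values'' form a strictly decreasing chain $c > c_1 > c_2 > \cdots$ of ordinals all bounded above by $c$; since $\mathrm{depth}(c) \le n-1$, the inductive hypothesis applied to $c$ itself states that every weakly decreasing sequence of ordinals $\le c$ stabilizes, so the chain $c > c_1 > c_2 > \cdots$ cannot be infinite. The iteration therefore halts after finitely many stages, and the full sequence $(a_i)$ stabilizes.
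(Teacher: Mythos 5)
Your proof is correct and takes essentially the same route as the paper's: stabilize the leading entries via the inductive hypothesis, peel off maximal blocks of a repeated value, iterate on the tails, and invoke the inductive hypothesis a second time on the strictly decreasing chain of block values ($c > c_1 > c_2 > \cdots$, which the paper calls $b > c > d > \cdots$) to show the iteration terminates. The differences are only bookkeeping: you induct on the nesting depth of $a_1$ together with a monotonicity lemma, where the paper inducts on the minimum leading-bracket height of the sequence, and you observe that the run lengths $m_i$ are weakly decreasing in $\mathbb{N}$ where the paper takes the minimal repetition count $B$.
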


The alert reader will notice that the statement of
Theorem~\ref{thm:e0} presupposes the concept of
an \emph{arbitrary infinite sequence} and hence is not finitary.
We will return to this point below, but first let us prove
Theorem~\ref{thm:e0}.
I encourage the reader to study the proof carefully,
since our later discussion about the correctness of Gentzen's proof
will be hard to appreciate otherwise.

\begin{proof}[Proof of Theorem~\ref{thm:e0}]
Define the \emph{height} $h(a)$ of an ordinal~$a$
to be the number of [ symbols at the beginning prior to
the first ] symbol.
It is easily proved by induction that
if $h(a) > h(b)$ then $a > b$;
equivalently, if $a\le b$ then $h(a) \le h(b)$.
The proof of Theorem~\ref{thm:e0} proceeds by induction on
$H := \min_i \{h(a_i)\}$.
If $H=1$, then $a_i$ is the empty list for
some~$i$, and since no list is strictly less
than the empty list, the sequence must stabilize at that point.

Otherwise, let us form the sequence $b_i := a_i[1]$.
Note that $h(b_i) = h(a_i)-1$.
Since $a_i \ge a_j$ whenever $i<j$, it follows that $b_i \ge b_j$
whenever $i<j$.
Therefore by induction,
all but finitely many of the $b_i$
are equal to a specific ordinal, which we call~$b$.
If we restrict attention to the $a_i$ such that $a_i[1]=b$,
then each such $a_i$ starts with some finite number of
repetitions of~$b$; let $B$ denote the smallest number of
repetitions (over all~$a_i$ such that $a_i[1] =b$).
Since the constituent lists of an ordinal are arranged
in weakly decreasing order, and since the $a_i$ are
arranged in weakly decreasing order, it follows that
the $a_i$ with exactly $B$ copies of~$b$
must come \emph{after} the $a_i$ with more than~$B$ copies of~$b$.
Hence all but finitely many of the~$a_i$
start with exactly $B$ copies of~$b$.
If some $a_i$ consists of exactly $B$ copies of~$b$ and nothing else,
then the sequence must stabilize at that point, and we are done.

Otherwise, restrict attention to those $a_i$ that start
with exactly $B$ copies of~$b$, and form the sequence
$c_i := a_i[B+1]$.  Then $h(c_i) \le h(b)$, so we can
repeat the same argument that we gave in the previous paragraph
to conclude that all but finitely many of the~$a_i$ start with
exactly $B$ copies of~$b$ followed by exactly $C$ copies of~$c$,
for some natural number~$C$ and some ordinal~$c < b$.
In this way we can inductively construct a decreasing sequence
of ordinals $b > c > d > \cdots$ of height less than~$H$.
Applying the induction hypothesis, this sequence must stabilize;
if it stabilizes with, say, $Z$ copies of~$z$,
then there must be some $a_j$ that precisely consists of $B$ copies
of~$b$ followed by $C$ copies of~$c$, etc., and terminating with
$Z$ copies of~$z$.  This $a_j$ must be $\le a_i$ for all~$i$,
and hence the sequence must stabilize with~$a_j$.
\end{proof}

We have stated and proved Theorem~\ref{thm:e0} in terms of
arbitrary infinite sequences,
because that is the easiest way to see what is going on.
For Gentzen's proof, though,
the following weak corollary of Theorem~\ref{thm:e0} suffices.

\begin{theorem}
\label{thm:e02}
If $M$ is a Turing machine\footnote{In fact, the
theorem can be further weakened to assert the stabilization
of all \emph{primitive recursive} descending sequences
of ordinals; see \cite[Lemma 12.79]{takeuti}
or \cite[Theorem 4.6]{buchholz} for example.
The fact that
PRA plus Theorem~\ref{thm:e02} implies that PA is consistent
is only implicit and not explicit in Gentzen's original proof.
I have chosen this way of presenting the argument
rather than the more common approach of explaining what
``induction up to~$\epsilon_0$'' is,
because I believe that Theorem~\ref{thm:e02}
is more accessible to the general
reader without training in logic and set theory.}
that, given $i$ as input,
outputs an ordinal~$M(i)$, and $M(i)\ge M(i+1)$ for all~$i$,
then the sequence stabilizes.
\end{theorem}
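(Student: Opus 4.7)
The plan is to observe that Theorem~\ref{thm:e02} is a literal special case of Theorem~\ref{thm:e0}. Given any Turing machine $M$ satisfying the hypothesis, the map $i \mapsto M(i)$ is an infinite weakly decreasing sequence of ordinals, so setting $a_i := M(i)$ reduces the statement immediately to Theorem~\ref{thm:e0}, whose conclusion is exactly the stabilization we want.

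If one wishes to respect the spirit of the restatement --- namely, to avoid any appeal to arbitrary infinite sequences --- then I would re-run the proof of Theorem~\ref{thm:e0} working throughout with Turing machines in place of abstract functions. First check as before that $h(a) > h(b)$ implies $a > b$, then induct on $H := \min_i h(M(i))$. The base case $H = 1$ is handled exactly as in the previous proof: the first $i$ for which $M(i) = [\,]$ is the point of stabilization. In the inductive step every $M(i)$ is non-empty, so one constructs an auxiliary Turing machine $M'$ that on input $i$ outputs $M(i)[1]$; such an $M'$ clearly exists, because extracting the first constituent of a list is a primitive recursive operation on the code of $M$. The hypothesis $M(i) \ge M(i+1)$ yields $M'(i) \ge M'(i+1)$, and $h(M'(i)) = h(M(i)) - 1$, so the induction hypothesis applies to $M'$ to produce the stabilizing value $b$. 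One then iterates the argument, at each stage producing a new Turing machine that extracts the next constituent after the prefix that has already been pinned down, and assembles the stabilizing $a_j$ from the finitely many ordinals $b > c > d > \cdots$ thus obtained.

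The main obstacle here is not mathematical but metamathematical. Since Theorem~\ref{thm:e02} is intended to serve as the principle added to PRA in Gentzen's consistency proof (as indicated in the footnote), one wants the argument above to be carried out using only reasoning that such a finitist will accept, with Theorem~\ref{thm:e02} itself applied only to the auxiliary machines that arise. Keeping the entire inductive construction genuinely finitary --- so that each $M'$ is produced by an explicit primitive recursive transformation of the code of~$M$, rather than by an existential appeal to ``some Turing machine'' --- is the delicate bookkeeping alluded to in the footnote. A fully careful proof therefore must display the passage $M \mapsto M'$ as an explicit primitive recursive operation on Turing-machine indices; this is routine but tedious, and I would relegate the verification to the understanding that ``extract the $k$th constituent of the output'' is obviously primitive recursive.
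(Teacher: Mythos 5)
Your first paragraph is exactly the paper's proof: Theorem~\ref{thm:e02} is presented there simply as a weak corollary of Theorem~\ref{thm:e0}, obtained by taking $a_i := M(i)$ and noting that the hypothesis gives a weakly decreasing sequence of ordinals. Your further finitary elaboration goes beyond what the paper does for this statement (the paper instead discusses a first-order-definable variant, Theorem~\ref{thm:e0}$'$, provable instance-by-instance in PA) and slightly understates the real obstacle---the stabilizing data such as $b$, $B$, and the index where stabilization begins are not computable from the code of~$M$, so the later auxiliary machines exist only non-constructively rather than via an explicit primitive recursive transformation---but none of that is needed to establish the stated theorem.
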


Although ordinals are commonly defined in the literature
using set theory, Theorem~\ref{thm:e02} can be formalized without
set theory; it can for example be phrased in the first-order
language of arithmetic, by using standard tricks for encoding
Turing machines and finite sequences using natural numbers.
In fact, Theorem~\ref{thm:e02} can \emph{almost} be proved in~PA.
The full justification of this claim is rather technical,
so again we will just sketch the idea.

First, we can formulate a theorem---call it Theorem~\ref{thm:e0}$'$---that
is intermediate in strength between Theorem~\ref{thm:e0}
and Theorem~\ref{thm:e02}, which restricts Theorem~\ref{thm:e0} to
weakly decreasing sequences of ordinals that are \emph{definable
by a first-order formula~$\phi$}.
To prove this version of the theorem,
suppose we have a formula~$\phi$
that defines a weakly decreasing sequence of ordinals
and asserts that they all have height at least~$H$.
Then we can mimic the proof of Theorem~\ref{thm:e0} to construct
a PA proof of Theorem~\ref{thm:e0}$'$ for~$\phi$.
The only catch is that we need, as building blocks,
PA proofs of Theorem~\ref{thm:e0}$'$ for formulas with smaller~$H$---but
we can assume by induction that these are available.
Note that this is an inductive procedure for \emph{constructing PA proofs
of individual instances of Theorem~\ref{thm:e0}$'$}
and cannot be converted to a PA proof of Theorem~\ref{thm:e0}$'$ itself;
however, it illustrates that each instance of Theorem~\ref{thm:e0}$'$
can be proved without assuming the existence of infinite sets.

\section{Gentzen's Consistency Proof}

Gentzen is usually regarded as having produced four different
versions of his consistency proof.  Only three versions were
published during his lifetime, but the first published version
is usually called his second proof, because it involved a major
revision of the version that he originally submitted for publication.
All versions of his proof may be found
in his collected works~\cite{gentzen}.
For our present purposes, the differences between the versions
are not critical, so we simply refer to ``Gentzen's proof''
without specifying the version.

Giving a full account of Gentzen's proof is beyond the scope of
this article because it necessarily involves careful attention to
the nitty-gritty details of~PA, but we give a sketch of the main
idea, following the account of Tait~\cite{tait}.
It is convenient to assume
that negation $\neg$ occurs only in \emph{atomic} formulas,
meaning those not involving $\vee$, $\wedge$, $\forall$, or $\exists$
(this can always be achieved because $\neg$ can always be ``pushed
inside'' at the cost of toggling between $\vee$ and $\wedge$
and between $\forall$ and $\exists$).  Imagine that you are playing
a game against an adversary, and the state of the board at any time
consists of a finite number of sentences.
Your goal is to reach a state in which one
of the sentences is a true atomic sentence.

The \emph{components} of the sentences $\phi \vee \psi$ and
$\phi \wedge \psi$ are $\phi$ and~$\psi$.  The components of
the sentences $\forall x\phi(x)$ and $\exists x\phi(x)$
are the sentences $\phi(\mbox{SSS}\cdots \mbox{S0})$ for some finite number
of occurrences of~$S$.  When it is your turn, you point to
a sentence~$\phi$, and if it is a $\vee$-sentence or an $\exists$-sentence,
then you add one of the components of~$\phi$ to the board,
and then you go again.
If you point to a $\wedge$-sentence or a $\forall$-sentence,
then it is your adversary's turn; the adversary adds a component
of~$\phi$ to the board and removes $\phi$ from the board,
and then it is your turn again.

To understand the point of the game, let us provisionally accept
the reality of~$\mathbb{N}$, and regard
sentences as making assertions about~$\mathbb{N}$
that are either true or false.  We are trying to show that
at least one of the sentences on the board is true by
instantiating all the variables with specific numbers
and reducing everything to an atomic sentence whose truth
can be directly checked by numerical calculation.  When a
universal quantifier shows up, we allow an adversary to instantiate
the variable since we are supposed to be able to win no matter
what the adversary picks.  Intuitively, we will have a winning
strategy---which, following Gentzen, we call a \emph{reduction}
of the initial state---if and only if at least one of the sentences
on the board is true.

If we now don our skeptical face and claim not to understand
what \emph{truth} means, we can forget about truth and simply
use the existence of a reduction as a surrogate for truth.
Now suppose we have a set~$\Gamma$ of sentences arising in
a formal PA-proof.
The core of Gentzen's proof, where the hard work is done, is
to construct, in an effective manner, a reduction of~$\Gamma$.
This is done inductively, by showing that if we have a reduction
of~$\Gamma$, and we introduce an axiom or a rule of inference,
then the resulting $\Gamma'$ also has a reduction.  The punchline
is that some sentences, such as $0=\mbox{S0}$, manifestly have no
reduction, and so are not derivable in~PA.

The reason ordinals show up in the proof is that they are used
to track game trees.  In particular, we need to be able to show
that reductions always terminate.  Proving this requires
Theorem~\ref{thm:e02} (or something similar).

\section{Implications of Gentzen's Proof}
\label{sec:gentzenimplications}

Gentzen's proof certainly meets ordinary standards
of mathematical rigor, but remember that we are trying
to adhere to higher than usual standards.
So what assumptions are really needed to carry out the proof?
Answering this question requires not just understanding the argument,
but also some experience with \emph{formalizing}
mathematical arguments.  Fortunately for us,
logicians have carefully analyzed the argument,
and the verdict is that other than Theorem~\ref{thm:e02},
everything in Gentzen's proof can be formalized in PRA,
which as we said earlier is a system of axioms
that is widely regarded as being finitary and very conservative.
In particular, PRA makes no reference to infinite sets.
Thus, Gentzen has reduced the
analysis of arbitrarily complicated first-order sentences of~PA,
and their classical logical consequences,
to a single finitary statement, namely Theorem~\ref{thm:e02}.
What objection might one have to Theorem~\ref{thm:e02}?

Voevodsky's objection was that Gentzen's only justification for
Theorem~\ref{thm:e02} was that it was self-evident---a suspicious claim,
according to Voevodsky, since G\"odel's theorem tells
us that Theorem~\ref{thm:e02} cannot be proved using ``usual
induction techniques.''
If we take this objection at face value,
then it is at best misleadingly phrased.
Gentzen does not say that Theorem~\ref{thm:e02} (or rather,
the variant of it that he uses in his proof) is self-evident;
he gives an inductive argument along the lines we have given.
As we have seen, by normal mathematical standards,
there is nothing particuarly ``unusual'' about the inductive
argument\footnote{A far stronger induction
argument was used by Robertson and Seymour in their proof of
the Graph Minor Theorem~\cite{frs},
and nobody seems to have rejected the
Graph Minor Theorem on those grounds.}.
The only way I have been able to make sense of Voevodsky's argument
is by interpreting him as assuming
that a consistency proof for a system can be convincing
only if it can be carried out in a system strictly weaker than the
system itself.
If we accept this assumption then we can indeed view
G\"odel's theorem as a dealbreaker,
but then Voevodsky's objection becomes a blanket rejection
of \emph{all} consistency proofs,
and has nothing to do with any specific concerns about PA
or Gentzen's proof.
As we argued earlier, G\"odel's theorem, which Voevodsky cites in support
of his objection, does not entail such blanket skepticism.

It could be that Voevodsky's real concern
was that even though the \emph{statement}
of Theorem~\ref{thm:e02} is finitary,
it does not feel like an axiom,
and the only ways to justify it seem to be infinitary.
Gentzen tried to argue that the induction needed for his proof
was just more complicated, and not different in character, from
the finitary induction argument that any weakly decreasing
sequence of \emph{natural numbers} must eventually stabilize.
But since ``finitary'' is not precisely defined,
the point can be legitimately debated.
Note, though, that rejecting the proof of Theorem~\ref{thm:e0}
comes with a cost;
it potentially means that many routine mathematical arguments
by induction are suspect---not just those involving
arbitrarily complex first-order properties.

Alternatively, Voevodsky's real concern may have been that
the proof of Theorem~\ref{thm:e02} is insufficiently \emph{constructive},
since the stabilization point is not, in general, computable.
Again, this could be a tenable objection,
but it comes at a price, because
rejecting all ``uncomputable mathematics'' means rejecting
a sizable fraction of all mathematics.
A plausible candidate
for an axiomatization of ``computable mathematics''
(assuming classical logic and not intuitionistic logic)
is a system known as RCA$_0$~\cite{simpson}.
In RCA$_0$ one cannot prove the consistency of~PA,
but one cannot prove Brouwer's fixed-point theorem
or the Bolzano--Weierstrass theorem either.

\section{Friedman's Relative Consistency Proof}

Speaking of the Bolzano--Weierstrass theorem,
we should mention a result due to Harvey Friedman,
announced on the Foundations of Mathematics
mailing list~\cite{friedmanfom}
but not formally published, that the inconsistency
of~PA would imply the inconsistency of a system called SRM+BWQ.
Here SRM (Strict Reverse Mathematics~\cite{friedmansrm})
is a weak system of axioms that serves as a ``base theory,''
and BWQ (Bolzano--Weierstrass for~$\mathbb{Q}$)
is the familiar mathematical principle that every
bounded infinite sequence of rationals has an infinite Cauchy
subsequence.

Friedman's proof is not directed at those who are skeptical of
infinite sets or uncomputable sequences,
since it uses both concepts
(the set of indices of the subsequence promised by BWQ can,
and usually will, be an uncomputable set of natural numbers,
even if the original sequence is computable).
Rather, it is directed at those who feel that
formal systems for mathematics are artificially strong and
overly general, and who argue that ``natural'' mathematical
statements require only a limited set of induction principles.
In particular, they reject the inductive proof
of Theorem~\ref{thm:e0} as being unnaturally strong.
Friedman argues that SRM+BWQ uses only principles
that are routinely accepted in ``mainstream mathematics,''
and hence that anyone who accepts that ordinary mathematical
reasoning is consistent should accept that PA is consistent.

Even a sketch of Friedman's proof requires concepts that go
beyond the scope of this article, but since his argument is
not well known, we say a few words here for the benefit
of readers with some background in logic.
If we replace SRM with RCA$_0$,
then the result is proved in Simpson~\cite[Theorem I.9.1]{simpson}.
The key point is that the unbounded existential quantifier
in BWQ allows one to construct computably enumerable sets
(e.g., the set of all Turing machines that halt) from
computable approximations.
In the terminology of second-order arithmetic,
this lets us pass from $\Delta_1^0$ comprehension to
$\Sigma_1^0$ comprehension,
which can then be ``bootstrapped'' up to arithmetical comprehension.
Therefore every axiom of PA can be derived in RCA$_0$+BWQ,
yielding a relative consistency proof.
In SRM, one strips down this argument to its bare essentials
to avoid ``unnecessary generality,'' but BWQ still plays the
same role of providing the crucial unbounded existential quantifier.
Note that a variety of other mathematical statements besides BWQ
could do the job equally well.

\section{Taking Stock}

There are other ways to prove the consistency of~PA
(e.g., there is a relative consistency proof based on G\"odel's
``Dialectica'' interpretation~\cite{avigad-feferman}),
but the results we have discussed so far already show
that the normal mathematical standards for declaring
something to be proved, known, solved, and no longer an open problem
have been met and even exceeded.
Even those who are doubtful about \emph{some}
mathematical methods may still be able to regard
the consistency of~PA as being settled.
\begin{enumerate}
\item
If we believe that $\mathbb{N}$ must either have, or not have,
every property expressible in the first-order language of arithmetic,
then the straightforward set-theoretic proof should
satisfy us that PA is consistent.
\item
If we are doubtful about the meaningfulness of arbitrary
first-order properties of~$\mathbb{N}$, but we
believe Theorem~\ref{thm:e02}, along with routine mathematical principles
that are much simpler than Theorem~\ref{thm:e02}, then Gentzen's
proof should satisfy us that PA is consistent.
\item
If we believe that SRM+BWQ is consistent, then
Friedman's proof should convince us that PA is consistent.
\end{enumerate}
On the other hand, if we are exceptionally cautious,
we might reject all these proofs as using unjustified principles---but
if we do so, then we will have to reject significant portions of
ordinary mathematics as being unjustified as well.

Our discussion could end here, but some readers
may still be uneasy with the reference to
\emph{belief} (in infinite sets or Theorem~\ref{thm:e02} or~BWQ),
and the introduction of shades of gray into
a discussion about mathematics.
Isn't the point of mathematics to eliminate the need
for philosophical mumbo-jumbo and subjective, mystical beliefs,
and to rely on \emph{proof} instead?

The desire to avoid (or at least minimize) philosophical
assumptions, and defend the objectivity of mathematics,
leads some people to a point of view known as
\emph{formalism}.
Edward Nelson in particular
was a self-avowed formalist~\cite{nelsonformalism},
and even refused to believe in PRA.
Can formalism save us from having to make a personal
decision about what to believe?

\section{The Formalist Perspective}

The term \emph{formalist} has no mathematically precise definition.
Its meaning has changed slightly over time,
and different people mean different things by it.
I will give a description that I believe captures the main idea.

The formalist regards mathematics as a formal game played with
symbols.  There are rules for how the symbols are allowed to be
manipulated.  Importantly, the symbols \emph{have no meaning.}
If we say that \emph{every differentiable function is continuous,}
it does not mean that there really are such things as functions,
and that differentiability and continuity are real properties that
functions really have, and that every function that has the
differentiability property also has the continuity property.
Rather, all we are saying, in an abbreviated shorthand, is that
``every differentiable function is continuous'' \emph{is a theorem
of~ZFC} (or perhaps a theorem of some other axiomatic system that we 
are interested in).

For the formalist,
the \emph{only} meaningful mathematical statements we can make are
syntactic statements about strings of symbols.
It is also common, though not universal,
for formalists to say that even statements about syntactic objects are
meaningful only when they are short enough for us to apprehend and
manipulate physically.
That is, formalists are often \emph{ultrafinitists}.
For an ultrafinitist, even a statement such as
``$2^{77232917}-1$ is prime'' does not, as one might na\"\i vely
think, mean that if (for example) we took $2^{77232917}-1$ marbles
and tried to arrange them in a rectangular pattern, then the only
way to do so would be to arrange them in a straight line.
The problem is that we cannot possibly lay our hands on
$2^{77232917}-1$ marbles, so what ``$2^{77232917}-1$ is prime''
means is just that we have verified that our rules for
manipulating symbols such as ``$2^{77232917}-1$'' have produced
a certain result.
Formalists thus not only reject the reality of infinite sets,
but they often reject the reality of natural numbers
as well.  They may say that they do not know what it means to
say that ``there exists a prime number between 50 and~100'' other
than that this statement is a theorem of some formal system.

One of the selling points of formalism is that it allows us to
sidestep questions about whether infinite sets exist,
or even whether we believe this axiom or that axiom.
Is the continuum hypothesis true or false?
The formalist says, ask not whether the continuum hypothesis
is true or false; ask only whether it has been proved in this
system or that system.
Whereof one cannot speak thereof one must be silent.
The formalist thus seems to offer us a way to salvage the
objectivity of mathematics in the face of competing
axiomatic systems.  If you have a private mystical belief
in infinite sets, that's your business, says the formalist,
but in the mathematical marketplace, the only legal tender
is mathematical proof---the \emph{deduction} of theorems from
axioms, and not any questions about the \emph{truth} of the
axioms.

What does this mean about the consistency of~PA?
At first glance, it seems that the formalist approach should
be to sidestep the question of whether PA is \emph{really}
consistent.  Ask not whether PA is really consistent; ask only
whether ``PA is consistent'' is provable in this or that system.
It is provable in ZFC; it is provable in primitive recursive
arithmetic plus Theorem~\ref{thm:e02}; end of story.

Unfortunately, the matter is not quite so simple,
and formalists do not react this way.
The issue is this: ``PA is inconsistent'' states that
manipulating certain symbols according to certain rules
will produce a certain result, and this is precisely the
sort of statement that even a formalist agrees is directly
meaningful---at least if the length of the proof is sufficiently
short.  Therefore a formalist cannot dodge the question of
whether PA is consistent or inconsistent.

If a formalist must confront the consistency question,
then in the absence of an explicit derivation of a contradiction from
the axioms of~PA, what kinds of arguments might a formalist
accept as establishing that PA is consistent?

Different formalists might have different answers to this question,
but I would like to argue that for at least
one flavor of formalist---which I
will dub a \emph{strict formalist}---the answer is that
\emph{no mathematical argument can definitively establish
the consistency of~PA.}
Hence, if PA is in fact consistent, its consistency will remain,
for the strict formalist, an ``open problem'' \emph{permanently.}

What do I mean by a \emph{strict formalist}?
A strict formalist---let's call him Stefan---is able to recognize,
and verify as correct,
any existing formal mathematical proof, by following the syntactic rules.
But Stefan takes very seriously the statement
that \emph{symbols have no meaning}.
Just as symbols cannot be construed as ``referring'' to
manifolds or functions or integers, \emph{symbols cannot be
construed as referring to syntactic entities either}.
Any mathematical argument
that purports to \emph{prove} that PA is consistent is really just
a finite derivation of the meaningless string Con(PA)
from some other strings.
Stefan can \emph{manipulate} syntactic objects
but cannot interpret a mathematical proof
as saying anything \emph{about} syntactic objects.
Even if Stefan discovers a contradiction in~PA
and exclaims, ``PA is inconsistent!''  he will not
identify this meaningful English statement with the meaningless
string $\neg$Con(PA).

Stefan avoids all accusations of accepting
``PA is consistent'' for unfounded, mystical reasons, but
at the cost of throwing out the baby with the bathwater---Stefan
also cannot accept
most of what passes for mathematical knowledge.
For example, suppose we design a computer program
to search for positive natural numbers $a$ and~$b$ such that
$a^2=2b^2$.  Stefan has no \emph{conclusive} grounds
for believing that such a search is futile.  Granted, just as physicists
strongly believe certain well-confirmed physical theories, such as
the seeming impossibility of transmitting information faster than
the speed of light, Stefan may agree that it is a
``well-confirmed mathematical theory'' that our program
will never find what it is looking for.
However, the conviction
that conventional mathematicians have, that the \emph{proof} of
the irrationality of~$\sqrt 2$ gives us an \emph{a priori guarantee}
that the search will never terminate,
is unavailable to Stefan.

Stefan is thus faced with a puzzle that I call the
\emph{unreasonable soundness of mathematics.}
Stefan can observe that
conventional mathematicians are remarkably successful at
making accurate predictions of the results of syntactic
manipulations, but has no explanation for this success\footnote{Note
that the unreasonable soundness of mathematics
is not the same as Eugene Wigner's unreasonable effectiveness
of mathematics in the natural sciences.
What Stefan cannot explain are mathematicians' purely \emph{mathematical}
predictions rather than their \emph{scientific} predictions.}.

In practice, I suspect that few if any mathematicians are
strict formalists.  (Nelson was not, since he believed that
``demonstrably consistent'' formal systems
were possible~\cite{nelsonformalism}.)
Part of the reason may be that even though formalists
often pride themselves on their rejection of the reality of
abstract objects such as natural numbers,
they \emph{do} accept the \emph{reality of symbols}
and the \emph{reality of syntactic rules},
and these concepts are very close to natural numbers
and arithmetical operations on natural numbers.
Note that a \emph{symbol is an abstract entity}.
I can pick up a piece of chalk
and write ``$\phi$'' on a blackboard, and point to it,
but the symbol ``$\phi$'' is not identical to the collection
of chalk particles on the blackboard.
I could have written $\phi$ on a piece
of paper, or I could have typed \texttt{\char92 phi} into a computer and
used \TeX\ to convert it to pixels on a screen,
and if all these multifarious physical entities are
supposed to be \emph{the same symbol},
then a ``symbol'' must be an abstract entity.
Moreover, in order to distinguish SSSS0 from SSSSS0,
I have to be able to \emph{count},
and it is very fine line between affirming the objectivity of counting
and affirming the reality of small natural numbers.
For a human being,
it is a very short step from being able to follow
syntactic rules to \emph{reasoning} about the outcome,
and before you know it, you find yourself insisting that if
you start with the string ``0'' and
all you do is repeatedly apply
the rule ``prepend an S'' to it,
then you will \emph{never} get a string with
(say) a ``$\wedge$'' in it,
even though all Stefan is equipped to do
is verify the absence of a $\wedge$ from
the strings S0, SS0, SSS0, etc., on a case-by-case basis.

If someone abandons strict formalism and accepts that
at least some types of mathematical reasoning
can provide secure knowledge about syntactic objects,
then we are back to shades of gray---one simply has
to decide what mathematical principles one accepts,
and then, depending on how strong those principles are,
one may or may not be able to conclude that PA,
or some other axiomatic system, is consistent.

\section{Finite Approximations to Consistency}

There is an angle on the consistency question that
someone who is not quite a strict formalist but who
has ultrafinitist leanings---let's call her Ulphia---might take.
Namely, Ulphia might not consider
the conventional reading of ``PA is consistent'' to be meaningful.
Instead, Ulphia might regard as meaningful only
what a conventional mathematician would call a
\emph{finite approximation} to the consistency of~PA,
by which I mean something like the following:
\begin{equation}
\label{eq:approx}
\mbox{The shortest PA proof of a contradiction has length $>n$}
\end{equation}
where $n$ is some number of feasible size.
If Ulphia believes in \emph{some} reasoning principles,
then presumably a proof of~\eqref{eq:approx} using those principles
(with $n$ being near the upper limit of feasibility)
would convince her that
searching for a PA proof of a contradiction
would be a wild goose chase.

If we let Con(PA,$n$) denote the statement that there is no
PA proof of a contradiction of length less than~$n$,
then we can ask for the length of the shortest PA proof of Con(PA,$n$).
Friedman has proved an $n^\epsilon$ lower bound on this length
(for some $\epsilon>0$), and 
and Pudl\'ak has proved a polynomial upper bound.
More interesting philosophically is the length of
the shortest proof of Con(PA,$n$) in a weaker
system, such as PRA, or even weaker systems such as bounded arithmetic.
Unfortunately such questions hinge on notorious unproved conjectures
in complexity theory, so almost nothing is known unconditionally.
Pudl\'ak and others conjecture superpolynomial lower bounds;
these would imply that even if Ulphia accepts some such system~$S$,
then any proof~$P$ in~$S$ that you can show her will only rule out
PA proofs of a contradiction
that are \emph{much} shorter than~$P$ itself,
and so will not necessarily convince her that it is pointless to search
for PA proofs of a contradiction.
For more on this subject, see~\cite{pudlak} and the references therein.

\section{Concluding Remarks}

Mathematicians typically take the attitude
that mathematical statements are either settled or open,
known or not known, proved or not proved,
and that mathematics is completely objective
and relies on nothing that is unproven.
But what this attitude glosses over
is that accepting a proven theorem
requires accepting the assumptions on which the proof is based.
This simple principle applies not only to
theorems that go beyond ZFC, but to \emph{every} theorem.

We have seen that by the usual standards of mathematical rigor,
the consistency of PA is a proven theorem and not an open problem.
On the other hand, you are free to reject
``the usual standards'' in favor of some other, stricter standards.
Depending on what those standards are,
you may or may not be able to conclude that PA is consistent.
If you want to minimize the assumptions you make,
then you might gravitate towards formalism,
but doing so might mean giving up much if not all
of what is commonly regarded as rigorously established mathematics.
In mathematics, as in life, there is no free lunch.

Earlier, we raised the question of whether an
inconsistency in~PA would cause all of mathematics to
come crashing down like a house of cards.
Would we all be doomed to suffer the fate of
the protagonist in Ted Chiang's short story
``Division By Zero''~\cite{chiang},
who discovers a contradiction in mathematics and is
unable to cope?
If we regard mathematics as a monolithic entity
with only one possible foundation on which everything depends,
then the answer might seem to be yes,
but if we recognize that there is a sliding scale
of axiomatic systems ranging from very weak systems
all the way up to large cardinal axioms in set theory,
then the answer is no.
If PA were found to be inconsistent
then most likely we would simply analyze the inconsistency
and adopt some other axiomatic system that avoids the problem.
For example, there exist \emph{paraconsistent} logics~\cite{carnielli}
that are not explosive, and that can recover gracefully from a
contradiction.
There is also an entire field called
\emph{reverse mathematics}~\cite{simpson} devoted to
analyzing exactly which axioms are needed for which theorems---but
that is a topic for another essay.

\section{Acknowledgments}

I would like to thank Anton Freund for pointing me to
the work of Takeuti~\cite{takeuti} and Buchholz~\cite{buchholz},
and Harvey Friedman, Sam Moelius, Joe Shipman, Bill Tait, and Bill Taylor
for helpful comments on an earlier draft of this paper.

\end{document}